\theoremstyle{plain}
\newtheorem{theorem}{Theorem}
\newtheorem{proposition}[theorem]{Proposition}
\newtheorem{lemma}[theorem]{Lemma}
\theoremstyle{remark}
\newtheorem*{ackowledgements}{Ackowledgements}
\newtheorem*{notations}{Notations and conventions}
\newtheorem{remark}[theorem]{Remark}
\theoremstyle{definition}
\newtheorem{definition}[theorem]{Definition}
\numberwithin{equation}{section}
\DeclareMathOperator{\ad}{ad}
\DeclareMathOperator{\Aut}{Aut}
\DeclareMathOperator{\End}{End}
\DeclareMathOperator{\Lie}{Lie}
\DeclareMathOperator{\Res}{Res}
\DeclareMathOperator{\St}{St}
\newcommand{\bZ}{\mathbb Z}
\newcommand{\bQ}{\mathbb Q}
\newcommand{\bR}{\mathbb R}
\newcommand{\bC}{\mathbb C}
\newcommand{\sA}{\mathcal A}
\newcommand{\sB}{\mathcal B}
\newcommand{\sH}{\mathcal H}
\newcommand{\sV}{\mathcal V}
\newcommand{\sX}{\mathcal X}
\newcommand{\sY}{\mathcal Y}
\newcommand{\germg}{\mathfrak g}
\newcommand{\germk}{\mathfrak k}
\newcommand{\germp}{\mathfrak p}
\newcommand{\germS}{\mathfrak S}
\newcommand{\tG}{\widetilde G}
\newcommand{\tV}{\widetilde V}
\newcommand{\pel}{\textsc{pel}}
\begin{document}

\title{Hodge structures associated to $SU(p,1)$}

\author{Salman Abdulali}
\address{Department of Mathematics, East Carolina University, Greenville, NC 27858, USA}
\email{abdulalis@ecu.edu}

\subjclass[2010]{Primary 14C30, 14K20}

\keywords{Hodge structure, abelian variety, Kuga fiber variety, Tate twist}

\begin{abstract}
Let $A$ be an abelian variety over $\bC$ such that the semisimple part of the Hodge group of $A$ is a product of copies of $SU(p,1)$ for some $p>1$.
We show that any effective Tate twist of a Hodge structure occurring in the cohomology of $A$ is isomorphic to a Hodge structure in the cohomology of some abelian variety. 
\end{abstract}

\maketitle

\section{Introduction}

This paper is a contribution to the classification of Hodge structures which occur in the cohomology of abelian varieties.
We consider the question of when an effective Tate twist of such a Hodge structure is itself isomorphic to a Hodge structure in the cohomology of some abelian variety.
Our interest in this question arises from the fact that the general Hodge conjecture, as formulated by Grothendieck, implies that any effective Tate twist of the Hodge structure occurs in the cohomology of some smooth, projective algebraic (not necessarily abelian) variety \cite{Grothendieck1969b}.

Recall that a (rational) Hodge structure of weight $n$ is a finite dimensional vector space $V$ over $\bQ$, together with a decomposition
\[
	V_{\bC} = \bigoplus_{p+q=n} V^{p,q}
\]
such that $V^{q,p} = \overline{V^{p,q}}$.
The Hodge structure is called \emph{effective} if $V^{p,q} = 0$ unless $p$,~$q \geq 0$.
For $m \in \bZ$, the \emph{Tate twist} $V(m)$ is the Hodge structure of weight $n-2m$ given by
$V(m)^{p,q} = V^{p+m,q+m}$.

In a series of papers
\cites{Abdulali1997, Abdulali1999, Abdulali2000, type3ghc,
Abdulali2004, Abdulali2005, Abdulali2011}
we have shown for a large class of abelian varieties that every effective Tate twist of a Hodge
structure contained in the cohomology of one of these abelian varieties is isomorphic to
a Hodge structure occurring in the cohomology of some abelian variety.
Our earlier results apply to abelian varieties of type IV in only a few cases---namely,
when the Hodge group is semisimple \cite{Abdulali1997}, or when
the abelian variety is of CM-type \cite{Abdulali2005}, or, when the semisimple part
of the Hodge group is a product of groups of the form $SU(p+1,p)$ \cite{Abdulali2011}.
In this paper we extend these results to any abelian variety $A$ such that the semisimple part of
the Hodge group of $A$ is a product of copies of $SU(p,1)$ for some $p>1$;
see Theorem~\ref{maintheorem} for the precise statement.

That not all abelian varieties have the above property was shown in \cite{type3ghc}.
In \cite{type3ghc}*{Theorem 5.5, p.~926} we showed the existence of a Hodge structure $M$ which occurs in
the cohomology of an abelian variety, such that $M(1)$ is effective but does not occur in
the cohomology of \emph{any} abelian variety.

We draw attention to two new features of this paper.
First, the concept of \emph{semidomination} (Definition~\ref{semidomination})
generalizes the concept of \emph{weak self-domination} introduced in \cite{Abdulali2011},
and allows us to work with the semisimple part of the Hodge group of an abelian variety
whose Hodge group is neither semisimple nor commutative (see Theorem~\ref{semi}).
Second, families of abelian varieties which are \emph{not} of \pel-type play a critical role,
even though our main concern is a general member of a \pel-family.

\begin{ackowledgements}
I am grateful to the referees for detailed and insightful comments which have led to significant improvements to this paper.
\end{ackowledgements}

\begin{notations}
All representations are finite-dimensional and algebraic.
The derived group of a group $G$ is denoted by $G'$.
All abelian varieties are over $\bC$.
For an abelian variety $A$, we let
\[
	D(A) = \End (A) \otimes \bQ
\]
be its endomorphism algebra, $L(A)$ its Lefschetz group, $G(A)$ its Hodge group,
$L'(A)$ the derived group of $L(A)$, and, $G'(A)$ the derived group of $G(A)$.
For a finite field extension $E$ of a field $F$, we let $\Res_{E/F}$ be
the restriction of scalars functor, from varieties over $E$ to varieties over~$F$.
The center of a group $G$ is denoted by $Z(G)$.
For a topological group $G$, we denote by $G^0$ the connected component of the identity.
\end{notations}

\section{Preliminaries}
\subsection{Kuga fiber varieties}

We briefly recall Kuga's construction of families of abelian varieties \cite{Kuga1};
full details may be found in Satake's book \cite{Satakebook}.

Let $G$ be a connected, semisimple, linear algebraic group over $\bQ$ with identity $1$.
Assume that $G$ is of hermitian type, and has no nontrivial, connected, normal $\bQ$-subgroup
$H$ with $H(\bR)$ compact.
Then $X = G(\bR)^0/K$ is a bounded symmetric domain for
a maximal compact subgroup $K$ of $G(\bR)^0$.
Let $\germg = \Lie G$ be the Lie algebra of $G$, $\germk = \Lie K$, and let
$\germg_{\bR} = \germk \oplus \germp$ be the corresponding Cartan decomposition.
Differentiating the natural map $\nu \colon G(\bR)^0 \to X$ induces an isomorphism of
$\germp$ with $T_o(X)$, the tangent space of $X$ at $o = \nu(1)$, and there exists
a unique $H_0 \in Z(\germk)$, called the \emph{$H$-element} at $o$,
such that $\ad H_0 | \germp$ is the complex structure on $T_o (X)$.

Let $\beta$ be a nondegenerate alternating form on a finite-dimensional rational vector space $V$.
The symplectic group $Sp(V,\beta)$ is a $\bQ$-algebraic group of hermitian type;
the associated symmetric domain is the \emph{Siegel space}
\begin{multline*}
	\germS(V,\beta) = \{\, J \in GL(V_{\bR}) \mid J^2 = -I \text{ and } \\
	\beta(x,Jy) \text{ is symmetric, positive definite}\, \}.
\end{multline*}
$Sp(V,\beta)$ acts on $\germS(V,\beta)$ by conjugation.
The $H$-element at $J \in \germS(V,\beta)$ is~$J/2$.

Let $\rho \colon G \to Sp(V, \beta)$ be a representation defined over $\bQ$.
We say that $\rho$ satisfies the \emph{$H_1$-condition} relative to the $H$-elements
$H_0$ and $H'_0 = J/2$ if
\begin{equation}
\label{h1}
	[d \rho (H_0) - H'_0, d \rho (g)] = 0 \qquad \text{for all }g \in \germg_{\bR}.
\end{equation}
If this is satisfied, then there exists a unique holomorphic map
$\tau \colon X \to \germS(V,\beta)$ such that $\tau (o) = J$, and the pair $(\rho, \tau)$
is equivariant in the sense that
\[
	\tau (g \cdot x) = \rho(g) \cdot \tau (x) \qquad \text{for all } g \in G(\bR)^0, \ x \in X.
\]
Let $\Gamma$ be a torsion-free arithmetic subgroup of $G(\bQ)$,
and $L$ a lattice in $V$ such that $\rho(\Gamma)L = L$.
The natural map
\[
	\sA = (\Gamma \ltimes_\rho L) \backslash (X \times V_{\bR})
	\longrightarrow \sV =  \Gamma \backslash X
\]
is a morphism of smooth quasiprojective algebraic varieties
(Borel \cite{Borel1972}*{Theorem 3.10, p.~559} and Deligne \cite{900}*{p.~74}),
so that $\sA$ is a fiber variety over~$\sV$ called a \emph{Kuga fiber variety}.
The fiber $\sA_P$ over any point $P \in \sV$ is an abelian variety isomorphic to the torus
$V_{\bR}/L$ with the complex structure $\tau (x)$, where $x$ is a point in $X$ lying over $P$.

\subsection{Two algebraic groups}
We recall  two algebraic groups associated to an abelian variety $A$ over $\bC$.
Let $V = H_1(A, \bQ)$, and let $\beta$ be an alternating Riemann form for $A$.
The \emph{Hodge group} $G(A)$, and \emph{Lefschetz group} $L(A)$ are
reductive $\bQ$-subgroups of $GL(V)$.
The Hodge group (or \emph{special Mumford-Tate group}) is characterized by the property that
for any positive integer~$k$, the invariants of the action of $G(A)$ on $H^{\bullet}(A^k,\bQ)$
form the ring $\sH(A^k)$ of Hodge classes (Mumford \cite{Mumford1966}).
The Lefschetz group is defined to be the centralizer of $\End (A)$ in $Sp(V, \beta)$;
it is characterized by the property that for any positive integer $k$,
the subring of $\sH(A^k)$ generated by the classes of divisors equals
$H^{\bullet}(A^k,\bC)^{L(A)_{\bC}} \cap H^{\bullet}(A^k,\bQ)$
(Milne \cite{Milne}*{Theorem~3.2, p.~656} and Murty \cite{murtybanff}*{\S 3.6.2, p.~93}).
Clearly,
\[
	G(A) \subset L(A) \subset Sp(V, \beta).
\]

The inclusion $G'(A) \hookrightarrow Sp(V, \beta)$ satisfies the $H_1$-condition \eqref{h1}
with respect to suitable $H$-elements.
Taking $L = H_1(A,\bZ)$, and $\Gamma$ to be any torsion-free arithmetic subgroup of $G'(A)$
such that $\gamma (x) \in L$ for all $\gamma \in \Gamma$, $x \in L$, we obtain a
Kuga fiber variety having $A$ as a fiber; this is called the \emph{Hodge family} determined by
$A$ (see Mumford \cite{Mumford1966}).

The inclusion $L'(A)^0 \hookrightarrow Sp(V, \beta)$ satisfies the $H_1$-condition,
and hence may be used to define a Kuga fiber variety having $A$ as a fiber.
These families of abelian varieties are generalizations of the \pel-families constructed by
Shimura in \cite{Shimura1963b}.

\begin{lemma}
\label{groupreduction}
Let $A$ be an abelian variety such that $G'(\bR)$ has no compact factors.
If $G'(A) = G_1 \times G_2 \times \dots \times G_t$, then $A$ is isogenous to a product
\[
	A \sim A_1 \times A_2 \times \dots \times A_t,
\]
where $G'(A_i) = G_i$ for $1 \leq i \leq t$.
\end{lemma}

\begin{proof}
Let $\sA \to \sV$ be the Hodge family of $A$, and let $P \in \sV$ be such that $A = \sA_P$.
Let $\rho \colon G(A) \to Sp(V, \beta)$ be the $H_1$-representation defining $\sA \to \sV$.
Assume without loss of generality that $\sV = \sV_1 \times \dots \times \sV_t$,
where $\sV_i$ is an arithmetic variety associated to $G_i$.
Let $P = (P_1, \dots, P_t)$ with $P_i \in \sV_i$.

We recall that a representation of $G$ is called \emph{primary} if all irreducible subrepresentations are equivalent.
Write $V = \bigoplus_{j=1}^s V_j$ where the $V_j$ are the maximal primary $G$-submodules of $V$,
and let
\[
	\rho_j \colon G \to Sp(V_j, \beta \vert V_j \times V_j)
\]
be the restriction of $\rho$ to $V_j$.
Then each $\rho_j$ satisfies the $H_1$-condition (see Satake \cite{Satakebook}*{Lemma IV.4.2, p.~181, and the remarks following it}).
By \cite{type3ghc}*{Lemma~2.4.1, p.~919}, each $\rho_j$ is nontrivial on $G_i$ for at most one $i$.
We may therefore write $\rho = \bigoplus_{i=1}^t \sigma_i$, where $\sigma_i$ is a representation of $G_i$ satisfying the $H_1$-condition.
Let $\sA_i \to \sV_i$ be the Kuga fiber variety determined by $\sigma_i$, and let $A_i$ be the fiber of $\sA_i$ over $P_i$.
Then $G'(A_i) = G_i$ and $A$ is isogenous to $A_1 \times A_2 \times \dots \times A_t$.
\end{proof}

\begin{proposition}
\label{center}
For any abelian variety $A$, the center of $G(A)$ is contained in the center of $L(A)$.
\end{proposition}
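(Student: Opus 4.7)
The plan is to exhibit both $L(A)$ and $Z(G(A))$ inside a single associative subalgebra of $\End(V)$, where $V := H_1(A,\bQ)$, and then exploit the defining property of $L(A)$ as a centralizer. Let $R \subset \End(V)$ denote the $\bQ$-subalgebra generated by $G(A)$. Since $G(A)$ is connected reductive and we are in characteristic zero, $V$ is a semisimple $R$-module.

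The key input is the standard identification $D(A) = \End_{G(A)}(V)$, which exhibits $D(A)$ as precisely the centralizer of $R$ in $\End(V)$. The double commutant theorem for a semisimple algebra acting on a finite-dimensional module then gives the reverse identity $\End_{D(A)}(V) = R$. Since $L(A)$ is by definition the centralizer of $D(A)$ in $Sp(V,\beta)$, this forces the inclusion $L(A) \subset R$.

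Now take any $z \in Z(G(A))$. Then $z$ commutes with every element of $G(A)$, hence by $\bQ$-linearity with every element of $R$, and in particular with every element of $L(A)$. Combined with the obvious containment $Z(G(A)) \subset G(A) \subset L(A)$, this yields $z \in Z(L(A))$, as desired.

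The only point that really needs checking is the characterization $D(A) = \End_{G(A)}(V)$ over $\bQ$; this is a direct consequence of Mumford's theorem (recalled in the excerpt) that the Hodge classes on powers of $A$ coincide with the $G(A)$-invariants, applied to the piece $\End(V) = V^* \otimes V$ of weight $0$, where the Hodge classes are exactly the endomorphisms of the Hodge structure, i.e., elements of $D(A)$. Everything else is formal from the double commutant theorem and the defining relation $L(A) = Z_{Sp(V,\beta)}(D(A))$.
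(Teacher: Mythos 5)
Your proof is correct, and it rests on the same two defining facts as the paper's --- that $D(A)$ is the centralizer of $G(A)$ in $\End(V)$ and that $L(A)$ is the centralizer of $D(A)$ in $Sp(V,\beta)$ --- but you route the argument through the double commutant theorem where the paper does not. The paper's proof is shorter and purely formal: an element $z$ of $Z(G(A))$ commutes with all of $G(A)$, hence lies in $D(A)$; since $L(A)$ by definition centralizes $D(A)$, it centralizes $z$; together with $z \in G(A) \subset L(A)$ this gives $z \in Z(L(A))$. You instead prove the ``dual'' containment $L(A) \subset R$ (where $R$ is the algebra generated by $G(A)$) and observe that $z$ centralizes $R$. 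Both middle steps deliver the needed statement that $L(A)$ and $z$ commute, but yours costs more: you must invoke semisimplicity of $V$ as an $R$-module (i.e., reductivity of $G(A)$ in characteristic zero) to apply the double commutant theorem, whereas the paper's version needs no structural input at all beyond the two centralizer definitions. Nothing is wrong with your argument --- the identification $D(A)=\End_{G(A)}(V)$ you justify at the end is exactly the fact the paper also uses --- but you could delete the entire double commutant paragraph and conclude directly from $z \in D(A)$ and $L(A) = Z_{Sp(V,\beta)}(D(A))$.
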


\begin{proof}
Let $V = H_1(A, \bQ)$, let $\beta$ be an alternating Riemann form for $A$, and,
let $Z$ be the center of $G(A)$.
Since $D(A)$ is the centralizer of $G(A)$ in $\End(V)$, it follows that $Z \subset D(A)$.
Since $L(A)$ is (by definition) the centralizer of $D(A)$ in $Sp(V,\beta)$, we see that
$Z$ is in the center of $L(A)$.
\end{proof}

\begin{remark}
\label{productremark}
If $A$ and $B$ are abelian varieties, then
\[
	H_1(A \times B, \bQ) = H_1(A, \bQ) \oplus H_1(B, \bQ)
\]
as Hodge structures,
so $G(A \times B)$ acts on the (co)homology of each factor.
It follows that $G(A \times B)$ is a subgroup of $G(A) \times G(B)$ whose projection to each factor is surjective (see Imai \cite{Imai}).
\end{remark}

\section{Semidomination}

We say that a Hodge structure $V_{\bC} = \bigoplus_{p+q=n} V^{p,q}$ of weight $n$ is
\emph{fully twisted} if it is effective, and, $V^{n,0} \neq 0$.
We say that a smooth, projective algebraic variety $A$ over $\bC$ is \emph{dominated} by
a set $\sX$ of smooth, projective algebraic varieties over $\bC$ if, for any irreducible Hodge
structure $V$ in the cohomology of $A$, there exists a fully twisted Hodge structure $V'$
in the cohomology of some $X \in \sX$ such that $V'$ is isomorphic to a Tate twist of $V$.
Grothendieck \cite{Grothendieck1969b}*{p.~301} observed that if $A$ is dominated by $\sX$, then the usual Hodge conjecture for  $A \times X$, for all $X \in \sX$, implies the general Hodge conjecture for $A$.

We now introduce the concept of \emph{semidomination} of an abelian variety, which will help us to prove that certain abelian varieties are dominated by classes of abelian varieties. Its main utility is in allowing us to work with the semisimple part of the Hodge group of an abelian variety whose Hodge group is neither semisimple nor commutative.

\begin{definition}
\label{semidomination}
We say that an abelian variety $A$ is \emph{semidominated} by a set~$\sX$ of abelian varieties if,
given any nontrivial irreducible representation~$\rho$ of $G'(A) _{\bC}$ such that $\rho$ occurs in
$H^n(A, \bC)$ for some $n$, there exist $A_{\rho} \in \sX$, a positive integer $c_{\rho}$, and,
$V_{\rho} \subset H^{c_{\rho}} (A_{\rho}, \bC)$, such that
\begin{itemize}
	\item
	$V_{\rho}$ is a $G(A_{\rho}) _{\bC}$-submodule of $H^{c_{\rho}} (A_{\rho}, \bC)$,
	\item
	the action of $G'(A \times A_{\rho})_{\bC}$ on $V_{\rho}$ is equivalent to
	$\rho \circ p_1$, where
	\[
		G'(A) \times G'(A_{\rho}) \supset G'(A \times A_{\rho}) \stackrel{p_1}{\longrightarrow} G'(A)
	\]
	is the projection to the first factor (see Remark \ref{productremark}),
	\item
	for each $\sigma \in \Aut (\bC)$, the Galois conjugate $\sigma (V_{\rho})$
	contains a nonzero $(c_{\rho}, 0)$-form.
	(The action of $\Aut (\bC)$ on $H^{c_{\rho}} (A_{\rho}, \bC) = H^{c_{\rho}} (A_{\rho}, \bQ) \otimes \bC$
	is through the second factor.)
\end{itemize}
\end{definition}

\begin{remark}
We will see in Theorem~\ref{semi} below that if $A$ is semidominated by~$\sX$, then
$A$ is dominated by abelian varieties.
However, an abelian variety may be dominated by abelian varieties without being semidominated
by any set of abelian varieties.
For example, let $A$ be a simple abelian variety of type~III which satisfies the hypotheses of
\cite{type3ghc}*{Theorem~4.1, p.~922}, and assume further that $D(A)$ has center $\bQ$, and
$m = \dim_{D(A)} H^1(A, \bQ) \geq 5$.
Then $A$ is dominated by the set of powers of itself \cite{type3ghc}*{Theorem~4.1, p.~922}, but we claim that it is not semidominated by any set of abelian varieties.
Suppose otherwise.
Then by the arguments in the proof of
\cite{type3ghc}*{Theorem~5.5, p.~926}, $A$ is semidominated by the set of powers of $A$.
Now let $V \subset H^m(A, \bQ)$ be an irreducible Hodge structure containing the representation $U_1 = U_{1,\alpha}$ defined in the proof of \cite{type3ghc}*{Theorem~4.1, p.~922}.
Then $V$ is fully twisted.
Let $V_1 \subset H^c(A^n, \bC)$ be the $G(A)_{\bC}$-module associated to $U_1$ by Definition~\ref{semidomination}. 
By \cite{type3ghc}*{Lemma~3.3.1, p.~921}, we must have $c=m$.
Then the smallest Hodge structure $\tV$ containing $V_1$ is isomorphic to $V$ (see the proof of Theorem~\ref{semi}).
This implies that every Galois conjugate of $U_1$ contains an $(m,0)$-form, which is seen to be false in the proof of \cite{type3ghc}*{Theorem~4.1, p.~922}.
\end{remark}

\begin{lemma}
\label{weakproduct}
If $A$ is semidominated by $\sX$ and $B$ is semidominated by $\sY$, and if
$G'(A \times B) = G'(A) \times G'(B)$, then $A \times B$ is semidominated by
\[
	\sX \cdot \sY = \{ X \times Y \mid X \in \sX,\ Y \in \sY \}.
\]
\end{lemma}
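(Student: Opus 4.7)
The plan is to decompose any nontrivial irreducible representation $\rho$ of $G'(A\times B)_{\bC}$ occurring in the cohomology of $A\times B$ as an external tensor product $\rho_1\otimes\rho_2$, invoke the semidominations of $A$ and $B$ separately on the two factors, and then assemble the resulting data via Künneth.

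Using the hypothesis $G'(A\times B)=G'(A)\times G'(B)$, every irreducible representation of $G'(A\times B)_{\bC}$ has the form $\rho=\rho_1\otimes\rho_2$, with $\rho_i$ irreducible on the corresponding factor. If $\rho$ occurs in $H^n(A\times B,\bC)$, then by Künneth it occurs in some summand $H^i(A,\bC)\otimes H^j(B,\bC)$ with $i+j=n$, so $\rho_1$ occurs in $H^i(A,\bC)$ and $\rho_2$ in $H^j(B,\bC)$. Since $\rho$ is nontrivial, at least one of $\rho_1,\rho_2$ is nontrivial. When $\rho_1$ is nontrivial I would apply the semidomination of $A$ by $\sX$ to obtain $A_{\rho_1}\in\sX$, a positive integer $c_{\rho_1}$, and a subspace $V_{\rho_1}\subset H^{c_{\rho_1}}(A_{\rho_1},\bC)$ as in Definition~\ref{semidomination}; when $\rho_1$ is trivial I would instead pick any element $A_{\rho_1}\in\sX$, take $c_{\rho_1}=0$, and set $V_{\rho_1}=\bC\subset H^0(A_{\rho_1},\bC)$. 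Treating $\rho_2$ and $B$ symmetrically, I would set $A_{\rho}:=A_{\rho_1}\times B_{\rho_2}\in\sX\cdot\sY$, $c_{\rho}:=c_{\rho_1}+c_{\rho_2}$, and $V_{\rho}:=V_{\rho_1}\otimes V_{\rho_2}\subset H^{c_{\rho}}(A_{\rho},\bC)$ via Künneth.

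Two of the three conditions of Definition~\ref{semidomination} are then immediate. The inclusion $G(A_{\rho})\subset G(A_{\rho_1})\times G(B_{\rho_2})$, combined with the stability of each $V_{\rho_i}$ under the Hodge group of the corresponding factor, shows that $V_{\rho}$ is a $G(A_{\rho})_{\bC}$-submodule of $H^{c_{\rho}}(A_{\rho},\bC)$. For each $\sigma\in\Aut(\bC)$ the Künneth isomorphism identifies $(V_{\rho})^{\sigma}$ with $(V_{\rho_1})^{\sigma}\otimes(V_{\rho_2})^{\sigma}$, and the tensor product of the nonzero $(c_{\rho_1},0)$- and $(c_{\rho_2},0)$-forms supplied by the two semidominations is a nonzero $(c_{\rho},0)$-form in $(V_{\rho})^{\sigma}$.

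The main obstacle is the second bullet of Definition~\ref{semidomination}. Set $H:=G'\bigl((A\times B)\times A_{\rho}\bigr)=G'(A\times B\times A_{\rho_1}\times B_{\rho_2})$. Since the Hodge group of a product of abelian varieties projects surjectively onto the Hodge group of any factor, $H$ surjects onto each of $G'(A\times A_{\rho_1})$ and $G'(B\times B_{\rho_2})$; composing with the two first-factor projections yields maps $p_A\colon H\to G'(A)$ and $p_B\colon H\to G'(B)$ that are nothing but the projections coming from the ambient $G'(A)\times G'(B)\times G'(A_{\rho_1})\times G'(B_{\rho_2})$. The semidominations of $A$ and $B$ then tell me that $H$ acts on $V_{\rho_1}$ via $\rho_1\circ p_A$ and on $V_{\rho_2}$ via $\rho_2\circ p_B$. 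Consequently $H$ acts on $V_{\rho}=V_{\rho_1}\otimes V_{\rho_2}$ via $(\rho_1\circ p_A)\otimes(\rho_2\circ p_B)=(\rho_1\otimes\rho_2)\circ(p_A,p_B)=\rho\circ p_1$, where $p_1\colon H\to G'(A)\times G'(B)=G'(A\times B)$ is the projection, which is exactly what is required.
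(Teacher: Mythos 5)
Your proof is correct and follows essentially the same route as the paper's (which is only a two-line sketch): decompose the irreducible representation as an external tensor product via $G'(A\times B)=G'(A)\times G'(B)$, apply the two semidominations, and set $V_{\rho}=V_{\rho_1}\otimes V_{\rho_2}$, with the degenerate case handled by tensoring with $H^0$ of an arbitrary member of the other family so as to land in $\sX\cdot\sY$. The details you supply for the second bullet of Definition~\ref{semidomination} are exactly the verification the paper leaves implicit.
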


\begin{proof}
Any irreducible representation of $G'(A \times B)_{\bC}$ is of the form $\rho \otimes \tau$,
where $\rho$ is an irreducible representation of $G'(A)_{\bC}$ and
$\tau$ is an irreducible representation of $G'(B)_{\bC}$.
Let
\[
	V_{\rho \otimes \tau} =
		\begin{cases}
			V_{\rho} \otimes V_{\tau} &\text{if both $\rho$ and $\tau$ are nontrivial;} \\
			V_{\rho} &\text{if $\rho$ is nontrivial but $\tau$ is trivial;}\\
			V_{\tau} &\text{if $\tau$ is nontrivial but $\rho$ is trivial.}
		\end{cases}
\]
Then each Galois conjugate of $V_{\rho \otimes \tau}$ contains a $(c_{\rho \otimes \tau},0)$-form for some $c_{\rho \otimes \tau}$.
\end{proof}

\begin{theorem}
\label{semi}
Let $A$ be an abelian variety semidominated by $\sX$.
Then $A$ is dominated by the set of abelian varieties of the form $B \times C$,
where $B \in \sX$, and $C$ is of CM-type.
\end{theorem}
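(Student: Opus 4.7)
The plan is to split an irreducible Hodge substructure $V\subset H^n(A,\bQ)$ into its $G'(A)$-representation content and its central-torus content, and to handle the two separately: the semisimple part using semidomination, the abelian part using a CM abelian variety.

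I would pick an absolutely irreducible $G(A)_{\bC}$-summand $V_1 \subset V_{\bC}$; as a $G'(A)_{\bC}$-module it is isomorphic to some irreducible representation $\rho$. If $\rho$ is trivial, then $V_{\bC}$ is itself a CM Hodge structure and the conclusion follows by pulling back from $H^{\bullet}(C,\bC)$ to $H^{\bullet}(B\times C,\bC)$ for any $B\in\sX$ and a suitable CM abelian variety $C$. Assume therefore that $\rho$ is nontrivial, and apply Definition~\ref{semidomination} to obtain $B=A_{\rho}\in\sX$ together with a $G(B)_{\bC}$-submodule $V_{\rho}\subset H^{c_{\rho}}(B,\bC)$ which, by the second bullet of that definition, is isomorphic to $V_1$ as a $G'(A\times B)_{\bC}$-module.

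Because $V_1$ and $V_{\rho}$ agree on $G'(A\times B)$, they differ as $G(A\times B)_{\bC}$-modules only by a character $\chi$ of the torus $T:=G(A\times B)/G'(A\times B)$, which by Proposition~\ref{center} is a quotient of the center of $L(A\times B)$. Invoking the standard fact that every polarizable Hodge structure with torus Mumford-Tate group occurs as a direct summand in the cohomology of a CM abelian variety, I would realize a suitable Tate twist of $\chi$ as a one-dimensional $\bC$-Hodge substructure $\chi_C\subset H^{\bullet}(C,\bC)$ of some CM abelian variety $C$. By choosing the Tate-twist representative appropriately, and if necessary replacing $V_1$ by a Galois-conjugate summand of $V_{\bC}$ (which has complex-conjugate central character), I may arrange that $\chi_C$ is itself of Hodge type $(a,0)$ with $a\geq 0$. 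Then $V_{\rho}\otimes\chi_C\subset H^{c_{\rho}+a}(B\times C,\bC)$ is a $G(B\times C)_{\bC}$-submodule isomorphic to $V_1$ up to a Tate twist; the $(c_{\rho},0)$-form in $V_{\rho}$ guaranteed by the third bullet of Definition~\ref{semidomination} combines with the $(a,0)$ part of $\chi_C$ to produce a $(c_{\rho}+a,0)$-form, so the tensor is fully twisted.

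It remains to descend from $\bC$ to $\bQ$. I would take the $\Aut(\bC)$-orbit sum of $V_{\rho}\otimes\chi_C$, absorbing its finitely many Galois conjugates by enlarging $B\times C$ if necessary, to obtain a $\bQ$-rational Hodge substructure. The third bullet of Definition~\ref{semidomination} is exactly what ensures that each conjugate of $V_{\rho}$ still contains a top-type form, so full twistedness survives the summation. I expect the main obstacle to be precisely this Galois descent together with the careful Hodge-type bookkeeping needed to keep the final structure \emph{fully} twisted (rather than merely effective) and to keep the ambient product of the form $B\times C$ with $B\in\sX$, as required.
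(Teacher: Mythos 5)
Your overall strategy coincides with the paper's: treat the case where $G'(A)$ acts trivially via \cite{Abdulali2005}, and otherwise tensor the module $V_{\rho}$ supplied by semidomination with a realization of the residual central character $\chi$ of $T=G(A\times B)/G'(A\times B)$ on a CM abelian variety, using the third bullet of Definition~\ref{semidomination} to preserve full twistedness. But one middle step of your plan would not go through as written. You propose to arrange that the character line $\chi_C$ is \emph{itself} of Hodge type $(a,0)$, by adjusting the Tate twist and ``if necessary replacing $V_1$ by a Galois-conjugate summand (which has complex-conjugate central character).'' Complex conjugation is not enough. The $\bQ$-irreducible CM Hodge structure $Z$ generated by $\chi$ (realized, as in the paper, inside a Hodge structure $W$ of CM type occurring in the tensor algebra of $H^1(A\times B)$) admits by \cite{Abdulali2005} a fully twisted Tate twist $Z'\subset H^c(C,\bQ)$, but the $(c,0)$-line of $Z'_{\bC}$ is $(Z'_{\chi})^{\sigma}$ for some $\sigma\in\Aut(\bC)$ which in general is neither the identity nor complex conjugation, and cannot be moved onto $Z'_{\chi}$ itself. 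The paper therefore does not normalize $\chi_C$; it fixes $U'=V_{\rho}\otimes Z'_{\chi}$, passes to the smallest $\bQ$-rational Hodge substructure $\tV'$ of $H^{c_{\rho}+c}(B\times C,\bQ)$ with $U'\subset\tV'_{\bC}$, and observes that $\tV'_{\bC}$ automatically contains $U'^{\sigma}=(V_{\rho})^{\sigma}\otimes(Z'_{\chi})^{\sigma}$, which carries a $(c_{\rho}+c,0)$-form precisely because the third bullet guarantees a $(c_{\rho},0)$-form in $(V_{\rho})^{\sigma}$ for \emph{every} $\sigma$. This also makes your ``orbit sum, enlarging $B\times C$ if necessary'' both unnecessary and risky: all conjugates of $U'$ already lie in $H^{c_{\rho}+c}(B\times C,\bC)$, and the ambient variety must remain of the form $B\times C$ with $B\in\sX$.

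The step you flag as the main obstacle is indeed the other missing piece: producing a fully twisted $\bQ$-rational $\tV'$ whose complexification shares one irreducible constituent with $V_{\bC}$ does not yet show that $V$ itself is isomorphic over $\bQ$ to a Tate twist of a Hodge substructure of $\tV'$. The paper closes this by noting that $V$ is an irreducible $G(A)$-module and $\tV'$ is a primary $G(B\times C)$-module, so that the nonvanishing of $\hom_{G_{\bC}}(V_{\bC},\tV'_{\bC})$, combined with the Zariski density of $G(\bQ)$ in $G(\bC)$ for $G=G(A\times B\times C)$, forces $\hom_G(V,\tV')\neq 0$ over $\bQ$; the image of a nonzero homomorphism is the required $V'$. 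Your plan needs this (or an equivalent) descent argument to be complete.
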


\begin{proof}
Let $V$ be any irreducible Hodge structure in the cohomology of $A$.
If $G'(A)$ acts trivially on $V$, then $V$ is of CM-type, so by
\cite{Abdulali2005}*{Theorem 3, p.~159} there exists an abelian variety $C$ of CM-type,
and a fully twisted Hodge structure $V'$ in the cohomology of $C$,
such that $V'$ is isomorphic to a Tate twist of $V$.
Otherwise, let $V_0$ be any irreducible $G(A)_{\bC}$-submodule of $V_{\bC}$;
it is necessarily irreducible as a $G'(A)_{\bC}$-module.
Since $A$ is semidominated by~$\sX$, there exists $B \in \sX$, and
an irreducible $G(B)_{\bC}$-submodule $V_{\rho}$ of $H^{c_{\rho}}(B, \bC)$
satisfying the conditions of Definition~\ref{semidomination}.

Let $T$ be the torus $G(A \times B)/G'(A \times B)$,
and let $T \to GL(W_0)$ be any faithful representation of $T$.
Since every representation of $G(A \times B)$ occurs in the tensor algebra of
$H^1(A \times B, \bQ)$ \cite{900}*{Proposition~3.1(a), p.~40}, there exists
a Hodge structure $W$  in the cohomology of some power of $A \times B$ such that
the action of $G(A \times B)$ on $W$ is equivalent to the representation
$G(A \times B) \to T \to GL(W_0)$.
Then $G(W) = T$, so, $W$ is of CM-type.

$V_0$ is equivalent to $V_{\rho}$ as a $G'(A \times B)_{\bC}$-module.
Therefore $V_0$ is equivalent to  $V_{\rho} \otimes \chi$ as a $G(A \times B)_{\bC}$-module,
where $\chi$ is a character of $T_{\bC}$.
The character $\chi$ occurs in the tensor algebra of $W$.
Let $W_{\chi}$ be its representation space, and let $Z$ be an irreducible Hodge structure
in the tensor algebra of $W$ such that $Z_{\bC}$ contains $W_{\chi}$.

By the main theorem of \cite{Abdulali2005}, there exists an abelian variety $C$ of CM-type and
an irreducible Hodge structure $Z' \subset H^c (C, \bQ)$ such that $Z'$ is isomorphic to
a Tate twist $Z(w)$ of $Z$, and, $Z'$ is fully twisted.
Let $\varphi \colon Z(w) \to Z'$ be an isomorphism of Hodge structures, and
let $Z'_{\chi} = \varphi(W_{\chi})$.
Then there exists $\sigma \in \Aut(\bC)$ such that $\sigma (Z'_{\chi})$ contains
a nonzero $(c,0)$-form.
Let
\[
	U' = V_{\rho} \otimes Z'_{\chi} \subset H^{c_{\rho}+c} (B \times C, \bC).
\]
Let $\tV'$  be the smallest Hodge substructure of $H^{c_{\rho}+c} (B \times C, \bQ)$
such that $\tV'_{\bC}$ contains $U'$.
Then $\tV'$ is fully twisted because $\sigma (U')$ contains a nonzero $(c_{\rho}+c, 0)$-form.
Recall that $V$ is an irreducible $G(A)$-module, and note that $\tV'$ is
a primary $G(B \times C)$-module, i.e., all of its irreducible submodules are mutually equivalent.
Let $G = G(A \times B \times C)$.
We have a $G_{\bC}$-isomorphism between $V_0$ and $U'$,
so $\hom_{G_{\bC}} (V_{\bC},\tV'_{\bC})$ is nontrivial.
Since $G(\bQ)$ is Zariski-dense in $G(\bC)$ (Rosenlicht \cite{Rosenlicht}*{Corollary, p.~44}),
we have
\[
	\hom_{G(\bC)} (V_{\bC},\tV'_{\bC}) =
	\hom_{G(\bQ)} (V_{\bC},\tV'_{\bC}) =
	\hom_G (V, \tV') \otimes \bC.
\]
Therefore $\hom_G (V, \tV')$ is nontrivial, and $V$ is isomorphic to a Tate twist of
an irreducible Hodge structure $V'$ contained in $\tV'$.
\end{proof}

\section{Abelian varieties associated to $SU(p,1)$}

Let $G$ be a simple algebraic group over $\bQ$ of hermitian type such that
$G(\bR) = \prod_{\alpha \in S} SU(p_{\alpha}, q_{\alpha})$.
In his classification of equivariant holomorphic maps, Satake found that in addition to the standard representation (and its contragredient), there are nonstandard representations of $G$ satisfying the $H_1$-condition \eqref{h1} if and only if one of $p_{\alpha}$, $q_{\alpha}$ equals $1$ for each $\alpha$.
We summarize his results for the groups of interest to us in the following theorem.

\begin{theorem}[Satake \citelist{\cite{Satake1965}*{\S 3.2, pp.~447--449} \cite{Satake1967}*{\S 8.3 (I$'$), pp.~268--269} \cite{Satakebook}*{\S IV.5, Exercise 1, p.~188}}]
\label{satake}
Let $G$ be a $\bQ$-simple algebraic group of hermitian type such that each simple factor of $G(\bR)$ is isomorphic to
$SU(p,1)$ for some $p>1$.
Let $m = p+ 1$. Then
\begin{enumerate}
\item
There exists a totally real number field $F$ and an absolutely simple algebraic group $\tG$ over $F$ such that
$G = \Res_{F/{\bQ}} \tG$.
Let $S$ be the set of embeddings of $F$ into $\bR$.
Then $G(\bR) = \prod_{\alpha \in S} G_{\alpha}$, where
$G_{\alpha} = \tG \otimes_{F,\alpha} \bR \cong SU(p,1)$.
\item \label{maintheoremmainpart}
For each $k = 1$, \dots, $p$, there exists a homomorphism of $\bQ$-algebraic groups
\begin{equation}
\label{satakemap}
	\rho_k \colon G \to Sp(V_k, \beta_k),
\end{equation}
where $\beta_k$ is a nondegenerate alternating form on a finite-dimensional
$\bQ$-vector space $V_k$, $\rho_k$ satisfies the $H_1$-condition \eqref{h1}, and, $\rho_k$
is equivalent over $\bR$ to
$\bigoplus_{\alpha \in S} \left(\bigwedge^k \oplus \bigwedge^{m-k}\right) \circ P_{\alpha}$,
where $P_{\alpha} \colon G(\bR) \to G_{\alpha}$ is the projection.
\item \label{converse}
If $\rho$ is any symplectic representation of $G$ which satisfies the $H_1$-condition, then $\rho$ is equivalent to a direct sum $\bigoplus_{i=1}^p m_i \rho_i \oplus (\textup{triv})$, where $(\textup{triv})$ denotes a trivial representation.
\end{enumerate}
\end{theorem}

\begin{remark}
\label{satakeremark}
We make several remarks about Theorem~\ref{satake} and its proof.
Let $\sA_k \to \sV$ be the Kuga fiber variety defined by
the symplectic representation \eqref{satakemap}.
\begin{enumerate}
\item
$\sA_1 \to \sV$ is a \pel-family.
\item
\label{remarkdual}
	$\rho_k$ and $\rho_{m-k}$ are equivalent.
\item
\label{remarkhodgeitem}
	Since $G(\bR)$ has no compact factors, it follows from
	\cite{Abdulali1994}*{Remark~3.5, p. ~213} that each  $\sA_k \to \sV$ is a Hodge family.
\item
	$\rho_k$ factors as
	\[
		G \stackrel{\tilde{\rho}_k}{\longrightarrow} G_k
		\stackrel{\tilde{\varphi}_k}{\longrightarrow} Sp(V_k, \beta_k),
	\]
	where $G_k = \Res_{F/\bQ} \widetilde{G}_k$ for a semisimple algebraic group
	$\widetilde{G}_k$ over $F$.
	We have $G_k(\bR) =  \prod_{\alpha \in S} G_{k,\alpha}$,
	where $G_{k,\alpha} \cong SU(p', q')$.
	Here, $p' = \binom{p}{k}$ and $q' = \binom{p}{k-1}$.
\item
	$\tilde{\rho}_{k,\bR} \colon G(\bR) \to G_k (\bR)$ can be described as follows:
	for $g = (g_{\alpha}) \in G(\bR) = \prod_{\alpha \in S} G_{\alpha}$, we have
	\[
		\tilde{\rho}_{k,\bR} (g) = \tilde{\rho}_{k,\alpha} (g_{\alpha}) \in
		G_k(\bR) =  \prod_{\alpha \in S} G_{k,\alpha},
	\]
	where
	$\tilde{\rho}_{k,\alpha} \colon G_{\alpha} = SU(p,1) \to G_{k,\alpha} = SU(p',q')$
	is equivalent to the representation on the $k$-th exterior power.
\item
\label{remarkdecomp}
	By part \ref{maintheoremmainpart} of Theorem~\ref{satake}, we can write
	\begin{equation}
	\label{decomp}
		V_{k,\bR} = \bigoplus_{\alpha \in S} V_{k,\alpha},
	\end{equation}
	where $G_{\alpha}$ acts trivially on $V_{k,\alpha'}$ unless $\alpha = \alpha'$, and
	$G_{\alpha,\bC}$ acts as $\bigwedge^k \oplus \bigwedge^{m-k}$ on $V_{k,\alpha,\bC}$.
\item
	$G_{k,\alpha}$ acts trivially on $V_{k,\alpha'}$ unless $\alpha = \alpha'$, and, 
	the action of $G_{k,\alpha,\bC}$ on $V_{k,\alpha,\bC}$
	is equivalent to the direct sum of the standard representation and its contragredient.
\item
\label{remarktype}
	Let $A_k$ be a general fiber of $\sA_k \to \sV$.
	If $k \neq \frac{m}{2}$, then $A_k$ is of type IV.
	If $k = \frac{m}{2}$, then $A_k$ is not of type IV, and both $G(A_k)$ and $L(A_k)$ are semisimple.
\end{enumerate}
\end{remark}

\begin{theorem}
\label{maintheorem}
Let $A$ be an abelian variety such that each simple factor of $G'(A)(\bR)$ is isomorphic to $SU(p,1)$ for some $p > 1$.
Then $A$ is dominated by abelian varieties.
\end{theorem}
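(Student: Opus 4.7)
My plan is to verify the hypothesis of Theorem~\ref{semi} by showing that every power of $A$ is semidominated by a set of abelian varieties built from the Satake--Kuga families of Theorem~\ref{satake}. I begin by applying Lemma~\ref{pelfactor} to decompose $A = A_1^{n_1} \times \cdots \times A_t^{n_t}$ into powers of pairwise nonisogenous simple \pel-factors with $G'(A) = \prod_i G'(A_i)$. The same decomposition holds for every power $A^N$, so Lemma~\ref{weakproduct}, applied inductively, reduces the problem to showing that every power of a single simple factor is semidominated. Fix such a simple factor $B$. Each irreducible representation of $G'(B)_{\bC} = \prod_{\alpha} SL(p_{\alpha}+1,\bC)$ that appears in the cohomology of a power of $B$ decomposes as an external tensor product over $\alpha$ of representations of $SL(p_{\alpha}+1,\bC)$, and each tensor factor is a summand of some tensor product of fundamental representations, i.e., of exterior powers of the standard representation.

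For each such irreducible $\rho$ I would construct $A_{\rho}$ using the Kuga fiber varieties $\sA_k$ provided by Theorem~\ref{satake}. When $\rho$ is a single fundamental representation $\bigwedge^k$ acting through one factor, $A_{\rho}$ is a general fiber of $\sA_k$ and $V_{\rho}$ is the summand of $H^{c_{\rho}}(A_{\rho},\bC)$ realizing that exterior power, as described in Remark~\ref{satakeremark}(6). For an arbitrary $\rho$, $A_{\rho}$ is a product of such fibers and $V_{\rho}$ is extracted by the Künneth formula as the isotypic subspace carrying the prescribed irreducible. The first two conditions of Definition~\ref{semidomination} then follow from the construction: $V_{\rho}$ is a $G(A_{\rho})_{\bC}$-submodule of the cohomology because each $\sA_k$ is a Hodge family by Remark~\ref{satakeremark}(3), and the action of $G'(B \times A_{\rho})_{\bC}$ on $V_{\rho}$ agrees with $\rho \circ p_1$ because $A_{\rho}$ is built from $B$, so that $G'(B \times A_{\rho})$ sits inside $G'(B) \times G'(A_{\rho})$ as the graph of the appropriate Satake homomorphism.

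The main obstacle will be the third condition of Definition~\ref{semidomination}: for every $\sigma \in \Aut(\bC)$, the conjugate $(V_{\rho})^{\sigma}$ must contain a nonzero $(c_{\rho},0)$-form. For a single fundamental $\bigwedge^k$ this rests on the fact that $\sA_k$ is a genuine Hodge family---by Remark~\ref{satakeremark}(1) it is \pel{} only when $k=1$---so the Hodge decomposition of the generic fiber is arranged to place a nonzero top-type form inside $V_{\rho}$; its persistence under $\Aut(\bC)$-conjugation should follow from the field-theoretic compatibility of Satake's construction. Passing from fundamental cases to a general irreducible $\rho$ by taking products and extracting irreducible summands of tensor products is the delicate step, and it is precisely here that the non-\pel{} Kuga fiber varieties highlighted in the introduction are essential, since tensor products of symplectic representations are generally not symplectic. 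Once stable semidomination is in hand, Theorem~\ref{semi} yields that $A$ is dominated by products $B \times C$ with $B$ built from the $A_{\rho}$'s and $C$ of CM-type, completing the proof.
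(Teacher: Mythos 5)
Your overall architecture matches the paper's: reduce to a simple factor via Lemma~\ref{pelfactor} and Lemma~\ref{weakproduct}, take the semidominating set $\sX$ to consist of products of general fibers $A_k$ of the Satake families $\sA_k$, verify the conditions of Definition~\ref{semidomination}, and conclude with Theorem~\ref{semi}. Your identification of $G'(A\times A_\rho)$ with $G'(A)$ is also essentially the paper's argument (the product family is a Hodge family, so $G'$ of the fiber sits inside $L'=G'(A)$ and surjects onto it). However, two of the three conditions of Definition~\ref{semidomination} are asserted rather than proved, and in one case the justification offered is wrong.

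First, the condition that every $\Aut(\bC)$-conjugate of $V_\rho$ contains a nonzero $(c_\rho,0)$-form is the heart of the matter, and ``field-theoretic compatibility of Satake's construction'' is not an argument. The paper proves it by an explicit computation: in $H^1(A_k,\bC)$ one has $V_{k,\alpha,\bC}=Y_{k,\alpha}\oplus\overline{Y}_{k,\alpha}$ with $Y_{k,\alpha}=\bigwedge^k Y_\alpha$, whose highest weight vector $w_k=u_1\wedge\cdots\wedge u_k$ is a $(1,0)$-form; for a general highest weight $\mu=\sum a_k\mu_k$ one takes the irreducible submodule of $\bigotimes_k V^{a_k}_{k,\alpha}\subset H^a(A_1^{a_1}\times\cdots\times A_p^{a_p},\bC)$ generated by $\bigotimes_k(w_k)^{a_k}$, which is an $(a,0)$-form, and one checks that the resulting \emph{set} of submodules (together with their conjugates $\overline{V}^\alpha_\mu$) is stable under $\Aut(\bC)$, so every Galois conjugate again contains a holomorphic form. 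This requires the hypothesis $q_\alpha=1$ in an essential way (it is what makes $Y_{k,\alpha}$ contain a $\binom{p}{k}$-dimensional space of $(1,0)$-forms with the highest weight vector among them); your sketch never uses it. Moreover, ``extracting the isotypic subspace by the K\"unneth formula'' does not suffice: you must select a specific irreducible copy containing a holomorphic form, since other copies of the same isotype need not contain one.

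Second, your justification that $V_\rho$ is a $G(A_\rho)_{\bC}$-submodule --- ``because each $\sA_k$ is a Hodge family'' --- does not work. The Hodge-family property controls $G'(A_k)$, not the center of $G(A_\rho)$, and the issue is precisely that $G(A_\rho)$ is not semisimple: one must check that the chosen irreducible $G'$-submodule $V_\rho$ is stable under $Z\bigl(G(A_\rho)\bigr)$. The paper does this via Proposition~\ref{center} (reducing to stability under $Z\bigl(L(A_\rho)\bigr)_{\bC}$) together with an explicit computation showing that a central element $z=(z_{k,\alpha})$ acts on $\bigotimes_k V^{a_k}_{k,\alpha}$ by the scalar $\prod_{2k<m} z_{k,\alpha}^{a_k+a_{m-k}}$, using the isomorphism $A_k\cong A_{m-k}$ to organize the factors of $L(A_\rho)$. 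This step is entirely absent from your outline and cannot be waved away, as it is the reason Proposition~\ref{center} appears in the paper at all.
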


\begin{proof}
Thanks to Theorem~\ref{semi}, it  suffices to show that $A$ is semidominated by some set of abelian varieties.
By Lemmas~\ref{groupreduction} and~\ref{weakproduct}, we may assume that $G'(A)$ is simple.
We are thus in the situation of Theorem~\ref{satake} with $G = G'(A)$.

Let $\sA \to \sV$ be the Hodge family of $A$, and let $P \in \sV$ be such that $\sA_P = A$.
For each $k = 1$, \dots, $p$, there is a Kuga fiber variety ${\sA}_k \to \sV$
defined by the symplectic representation $\rho_k$ \eqref{satakemap}.
Let $A_k = (\sA_k)_P$.
We will show that $A$ is  semidominated by the set 
\[
	\sX = \left\{ A_1^{n_1} \times \dots \times  A_p^{n_p} \mid n_i \geq 0 \right\}.
\]

Assume without loss of generality that $A$ has no factors of CM-type.
Then the $H_1$-representation defining $\sA \to \sV$ does not contain the trivial representation,
so by part \ref{converse} of Theorem~\ref{satake}, it is equivalent to $m_1\rho_1 + \dots + m_p\rho_p$,
for some nonnegative integers $m_1, \dots, m_p$.
Thus $\sA$ is isogenous to $\sA_1^{m_1} \times_{\sV} \dots \times_{\sV} \sA_p^{m_p}$.
It follows that $A$ is isogenous to $A_1^{m_1} \times \dots \times A_p^{m_p}$.
From now on we assume without loss of generality that
\[
	A = A_1^{m_1} \times \dots \times A_p^{m_p},
\]
and $m_1 > 0$.
In particular, $A \in \sX$.

We claim that $G'(A \times X) = G'(A)$ for any $X \in \sX$.
More precisely, we mean that the projection $G(A) \times G(X) \to G(A)$, restricted to the subgroup
$G'(A \times X)$ of $G(A) \times G(X)$ induces an isomorphism of $G'(A \times X)$ with $G'(A)$.
To see this, let $X = A_1^{n_1} \times \dots \times  A_p^{n_p}$.
Then
\[
	A \times X = A_1^{m_1+n_1} \times A_2^{m_2+n_2} \times \dots \times  A_p^{m_p+n_p}.
\]
Thus $A \times X$ is the fiber over $P$ of the Kuga fiber variety
\[
	\sB = \sA_1^{m_1+n_1} \times_{\sV} \sA_2^{m_2+n_2}  \times_{\sV} \dots \times_{\sV}   \sA_p^{m_p+n_p}
	\to \sV
\]
defined by the symplectic representation
\[
	\rho = (m_1+n_1)\rho_1 \oplus (m_2+n_2) \rho_2 \oplus \cdots \oplus (m_p+n_p) \rho_p
\]
 of $G'(A)$.
This is a Hodge family (see Remark \ref{satakeremark}.\ref{remarkhodgeitem}).
Hence $G'(\sB_Q) \subset G'(A)$ for all $Q \in \sV$.
But $G' (A)$ is a quotient of $G'(A \times X) = G'(\sB_P)$.
Therefore $G'(A \times X) = G'(A)$.

We can write $L(A_1)_{\bR} = \prod_{\alpha \in S} L_{1,\alpha}$ and
$V_{1,\bR} = \bigoplus_{\alpha \in S} V_{1,\alpha}$ where
$L_{1,\alpha} \cong U(p, 1)$ acts trivially on $V_{1,\alpha'}$ unless
$\alpha = \alpha'$, and,  $L_{1,\alpha,\bC} \cong GL_m(\bC)$ acts on $V_{1,\alpha,\bC}$
as the direct sum of the standard representation and its contragredient (Murty \cite{Murty}).
As explained in \cite{Abdulali1997}*{p.~351},
$V_{1,\alpha, \bC} = Y_{\alpha} \oplus \overline{Y}_{\alpha}$, where $Y_{\alpha}$ and
its complex conjugate $\overline{Y}_{\alpha}$ are  $L_{1,\alpha, \bC}$-modules,
$GL_m(\bC)$ acts on $Y_{\alpha}$ as the standard representation,
and on $\overline{Y}_{\alpha}$ as the contragredient.
$Y_{\alpha}$ is the direct sum of a $p$-dimensional space of $(1,0)$-forms and
a $1$-dimensional space of $(0,1)$-forms.
$\overline{Y}_{\alpha}$ is the direct sum of a $1$-dimensional space of $(1,0)$-forms and a
$p$-dimensional space of $(0,1)$-forms.
Choose a basis $\left\{ u_1, \dots, u_m \right\}$ of $Y_{\alpha}$ such that
$u_1$, \dots, $u_{p}$ are $(1,0)$-forms and $u_m$ is a $(0,1)$-form.
Then $\left\{ \overline{u}_1, \dots, \overline{u}_m \right\}$ is a basis of $\overline{Y}_{\alpha}$.

There is a natural action (from the left) of $\Aut (\bC)$ on $S$, the set of embeddings of $F$ into $\bR$.
For $\sigma \in \Aut (\bC)$ and $\alpha \in S$ we have
$\sigma \left\{ Y_{\alpha}, \overline{Y}_{\alpha} \right\} = \left\{ Y_{\sigma\alpha}, \overline{Y}_{\sigma\alpha} \right\}$.
Thus the set $\bigcup_{\alpha \in S} \left\{ Y_{\alpha}, \overline{Y}_{\alpha} \right\}$
is invariant under the action of $\Aut (\bC)$, and every Galois conjugate of $Y_{\alpha}$
contains a nonzero $(1,0)$-form.

Let $\mu_1$, \dots, $\mu_{m-1}$ be the fundamental weights of $SL_{m}(\bC)$, i.e., $\mu_k$
is the highest weight of the representation $\bigwedge^k (\St)$, where $(\St)$ denotes
the standard representation of $SL_{m}(\bC)$ on $\bC^m$.

For $1 \leq k \leq p$ and $\alpha \in S$, let $V_{k,\alpha}$ be as in Remark \ref{satakeremark}.\ref{remarkdecomp}.
Then $V_{k,\alpha,\bC} = Y_{k,\alpha} \oplus \overline{Y}_{k,\alpha}$, where
$Y_{k,\alpha} = \bigwedge^k Y_{\alpha}$ and
$\overline{Y}_{k,\alpha} = \bigwedge^k \overline{Y}_{\alpha}$ are  $GL_m(\bC)$-modules,
$GL_m(\bC)$ acting on $Y_{k,\alpha}$ as $\bigwedge^k (\St)$, and on $\overline{Y}_{k,\alpha}$
as the contragredient.

$Y_{k,\alpha}$ is the direct sum of a $\binom{p}{k}$-dimensional space of $(1,0)$-forms and
a $\binom{p}{k-1}$-dimensional space of $(0,1)$-forms.
In particular, the highest weight vector in $Y_{k,\alpha}$ is
\[
	w_k = u_1 \wedge \cdots \wedge u_k
\]
which is a $(1,0)$-form.
Define $\gamma \in GL_m(\bC)$ by $\gamma (u_i) = u_{m+1-i}$.
Then
\[
	w'_k = \gamma (w_k) = u_{m} \wedge \cdots \wedge u_{m+1-k}
\]
is a $(0,1)$-form.
$\overline{Y}_{k,\alpha}$ is the direct sum of a $\binom{p}{k-1}$-dimensional space of
$(1,0)$-forms and a $\binom{p}{k}$-dimensional space of $(0,1)$-forms.
Observe that the set $\bigcup_{\alpha \in S} \left\{ Y_{k,\alpha}, \overline{Y}_{k,\alpha} \right\}$
is invariant under the action of $\Aut (\bC)$, so every Galois conjugate of $Y_{k,\alpha}$
contains a nonzero $(1,0)$-form.

Let $j$ be a positive integer.
Then $S^j Y_{k,\alpha}$, the space of symmetric tensors on $Y_{k,\alpha}$,
is a representation of $SL_m(\bC)$ with highest weight $j\mu_k$,
and highest weight vector $(w_k)^j$.
Let $V^j_{k,\alpha} \subset H^j(A_k^j, \bC)$ be the $SL_m(\bC)$-module generated by $(w_k)^j$.
The highest weight vector in $V^j_{k,\alpha}$ is $(w_k)^j$ which is a $(j,0)$-form.
For $z \in \bC^{\times}$, the scalar matrix $z \cdot I$ acts on $S^j Y_{k,\alpha}$ as multipliction by $z^j$.
It follows that any $SL_m(\bC)$-submodule of $S^j Y_{k,\alpha}$,
in particular $V^j_{k,\alpha}$, is a $GL_m(\bC)$-submodule.
Therefore $V^j_{k,\alpha}$ contains the $(0,j)$-form $\gamma (w_k)^j = (w'_k)^j$.

For $\sigma \in \Aut (\bC)$ and $\alpha \in S$ we have
$\sigma \{ V^j_{k,\alpha}, \overline{V}^j_{k,\alpha} \} = 
\{ V^j_{k,\sigma\alpha}, \overline{V}^j_{k,\sigma\alpha} \}.$
Hence the set
\[
	\left\{ \, V^j_{k,\alpha} \mid \alpha \in S, \ 1 \leq k \leq p, \ j > 0 \, \right\} \cup
	\left\{ \, \overline{V}^j_{k,\alpha} \mid \alpha \in S, \ 1 \leq k \leq p, \ j > 0 \, \right\}
\]
is invariant under the action of $\Aut (\bC)$, and every Galois conjugate of $V^j_{k,\alpha}$
contains a nonzero $(j,0)$-form.

Any irreducible representation $\pi$ of $SL_{m}(\bC)$ has highest weight
\[
	\mu = a_1\mu_1 + \dots + a_p\mu_p
\]
where the $a_j$ are nonnegative integers.
Let $a_{\mu} = \sum_{k=1}^p a_k.$
Then the representation
\[
	\bigotimes_{k=1}^p V_{k,\alpha}^{a_k} \subset
	H^{a_\mu}(A_1^{a_1} \times \cdots \times  A_p^{a_p}, \bC)
\]
has highest weight $\mu$.
The vector $v_{\mu} = \bigotimes_{k=1}^p (w_k)^{a_k}$
generates an irreducible submodule $V_{\mu}^{\alpha}$ with highest weight $\mu$.
Note that a scalar matrix $z \cdot I$ acts on $\bigotimes_{k=1}^p V_{k,\alpha}^{a_k}$
as multiplication by $\prod_{k=1}^p z^{a_k}$, so $V_{\mu}^{\alpha}$ is a $GL_m(\bC)$-module.
Hence $V_{\mu}^{\alpha}$ contains both the $(a_{\mu},0)$-form  $v_{\mu}$ and
the $(0,a_{\mu})$-form $v'_{\mu} = \gamma (v_{\mu}) = \bigotimes_{k=1}^p (w'_k)^{a_k}$.
For $\sigma \in \Aut (\bC)$, we have
$\sigma (V_{\mu}^{\alpha}) \in \{ V_{\mu}^{\sigma\alpha}, V_{\overline{\mu}}^{\sigma\alpha} \}$,
where $\overline{\mu} = a_p \mu_1 + \dots + a_1 \mu_p$.
Thus the set
\[
	\bigg\{ \, V_{\mu}^{\alpha}\ \big\vert \ \alpha \in S, \
	\mu = \sum_{i=1}^p a_i \mu_i, a_i \geq 0 \, \bigg\}
\]
is invariant under the action of $\Aut (\bC)$, and every Galois conjugate of
$V_{\mu}^{\alpha}$ contains a nonzero $(a_{\mu},0)$-form.

Any irreducible representation $\rho$ of $G'(A)_{\bC} = \prod_{\alpha \in S} G'(A)_{\alpha, \bC}$
is of the form $\rho = \bigotimes_{\alpha \in S} \pi_{\alpha}$, where $ \pi_{\alpha}$
is an irreducible representation of $G'(A)_{\alpha, \bC} \cong SL_m(\bC)$.
Let
\[
	V_{\rho} = \bigotimes_{\alpha \in S} V_{\mu_\alpha}^{\alpha},
\]
where $\mu_{\alpha}$ is the highest weight of $\pi_{\alpha}$.
Then $V_{\rho}$ is an irreducible $G'(A)_{\bC}$-submodule of $H^c(A_{\rho}, \bC)$,
for some $A_{\rho} \in \sX$ and some positive integer $c$, on which $G'(A)_{\bC}$ acts as $\rho$.
Since every Galois conjugate of each $V_{\mu_\alpha}^{\alpha}$
contains a nonzero $(a_{\mu_{\alpha}}, 0)$ form,
it follows that every Galois conjugate of $V_{\rho}$ contains a nonzero $(c,0)$-form.

To complete the proof of the theorem we need to show that
the $G'(A_{\rho})_{\bC}$-module $V_\rho$ is actually a $G(A_{\rho})_{\bC}$-module.
By Proposition~\ref{center}, it is sufficient to show that
$V_{\rho}$ is a $Z\big(L(A_{\rho})\big)_{\bC}$-module.
Since $A_k$ and $A_{m-k}$ are isomorphic (Remark~\ref{satakeremark}.\ref{remarkdual}), we may write
\[
	A_{\rho} = \prod_{2k \leq m}  A_k^{n_k}.
\]
Assume, without loss of generality, that each $n_k > 0.$ Then
\[
	L(A_{\rho}) = \prod_{2k \leq m} L(A_k).
\]
If $k = \frac{m}{2}$, then $G(A_k)$ and $L(A_k)$ are semisimple (Remark \ref{satakeremark}.\ref{remarktype}), so it is sufficient to show that
$V_{\rho}$ is a $Z$-module, where
\[
	Z = \prod_{1 \leq k < \frac{m}{2}} Z\big(L(A_k)\big)_{\bC}.
\]
Now, for $k \neq \frac{m}{2}$,
\[
	Z\big(L(A_k)\big)_{\bC} = \prod_{\alpha \in S} Z_{k,\alpha}
\]
where each $Z_{k,\alpha} \cong \bC^{\times}$, and in the decomposition \eqref{decomp},
$Z_{k,\alpha}$ acts trivially on $V_{k,\alpha'}$ unless $\alpha = \alpha'$.
A scalar $z_{\alpha} \in Z_{k.\alpha}$ acts on $Y_{k,\alpha}$ as multiplication by $z_{\alpha}$,
and on $\overline{Y}_{k,\alpha}$ as multiplication by $\overline{z}_{\alpha}$;
thus $Y_{k,\alpha}$ and $\overline{Y}_{k,\alpha}$ are both $Z_{k,\alpha}$-modules.
The scalar $z_{\alpha}$ acts as multiplication by $z^j_{\alpha}$ on $V^j_{k,\alpha}$;
so $V^j_{k,\alpha}$ is a $Z_{k,\alpha}$-module.
Now let
\[
	z = (z_{k,\alpha}) \in Z\big(L(A_{\rho})\big)_{\bC} =
	\prod_{1 \leq k < \frac{m}{2}} \ \prod_{\alpha \in S} Z_{k,\alpha}.
\]
For a given $\alpha \in S$, $z$ acts on $\bigotimes_{k=1}^p V_{k,\alpha}^{a_k}$ as multiplication by
$\prod_{2k<m} z_{k,\alpha}^{a_k}$.
Hence any subspace of it (in particular, $V^{\alpha}_{\mu_{\alpha}}$) is a
$Z_{\bC}$-module.
It follows that $V_{\rho} = \bigotimes_{\alpha \in S} V_{\mu_\alpha}^{\alpha}$ is a
$Z_{\bC}$-module.
\end{proof}

\begin{remark}
The assumption that $p>1$ in Theorem~\ref{maintheorem} is only for convenience.
If $p=0$, then a result of Shimura \cite{Shimura1963b}*{Proposition~14, p.~176}
implies that $A$ is of CM-type, so the theorem is a special case of \cite{Abdulali2005}*{Theorem~4, p.~159}.
If $p=1$, then $SU(p,1) \cong Sp_2(\bR)$, so the theorem is a special case of \cite{Abdulali1997}*{Theorem~5.1, p.~348}.
\end{remark}

\begin{bibdiv}
\begin{biblist}

\bib{Abdulali1994}{article}{
	author={Abdulali, Salman},
	title={Conjugates of strongly equivariant maps},
	journal={Pacific J. Math.},
	volume={165},
	date={1994},
	pages={207\ndash 216},
	review={\MR{95i:32041}},
}

\bib{Abdulali1997}{article}{
	author={Abdulali, Salman},
	title={Abelian varieties and the general Hodge conjecture},
	journal={Compositio Math.},
	volume={109},
	date={1997},
	pages={341\ndash 355},
	review={\MR{98m:14008}},
}

\bib{Abdulali1999}{article}{
	author={Abdulali, Salman},
	title={Abelian varieties of type III and the Hodge conjecture},
	journal={Internat. J. Math.},
	volume={10},
	date={1999},
	pages={667\ndash 675},
	review={\MR{2000g:14013}},
}

\bib{Abdulali2000}{article}{
	author={Abdulali, Salman},
	title={Filtrations on the cohomology of abelian varieties},
	book={
		title={The Arithmetic and Geometry of Algebraic Cycles (Banff, AB, 1998)},
		editor={Gordon, B.\,B.},
		editor={Lewis, J.\,D.},
		editor={M\"uller-Stach, S.},
		editor={Saito, S.},
		editor={Yui, N.},
		series={CRM Proc. Lecture Notes},
		volume={24},
		publisher={Amer. Math. Soc.},
		place={Providence, RI},
		date={2000},
	},
	pages={3\ndash 12},
	review={\MR{2001d:14011}},
}

\bib{type3ghc}{article}{
	author={Abdulali, Salman},
	title={Hodge structures on abelian varieties of type III},
	journal={Ann. of Math. (2)},
	volume={155},
	date={2002},
	pages={915\ndash 928},
	review={\MR{2003g:14008}},
}

\bib{Abdulali2004}{article}{
	author={Abdulali, Salman},
	title={Hodge structures on abelian varieties of type IV},
	journal={Math. Z.},
	volume={246},
	date={2004},
	pages={203\ndash 212},
	review={\MR{2004k:14013}},
}

\bib{Abdulali2005}{article}{
	author={Abdulali, Salman},
	title={Hodge structures of CM-type},
	journal={J. Ramanujan Math. Soc.},
	volume={20},
	date={2005},
	pages={155\ndash 162},
	review={\MR{2006g:14016}},
}

\bib{Abdulali2011}{article}{
	author={Abdulali, Salman},
	title={Tate twists of Hodge structures arising from abelian varieties of type IV},
	journal={J. Pure Appl. Algebra},
	volume={216},
	date={2012},
	pages={1164\ndash 1170},
	review={\MR{2875335}},
}

\bib{Borel1972}{article}{
	author={Borel, Armand},
	title={Some metric properties of arithmetic quotients of symmetric spaces and an extension theorem},
	journal={J. Differential Geom.},
	volume={6},
	date={1972},
	pages={543\ndash 560},
	review={\MR{49 \#3220}},
}

\bib{900}{article}{
	author={Pierre Deligne (notes by J.~S.~Milne)},
	title={Hodge cycles on abelian varieties},
	book={
		title={Hodge Cycles, Motives, and Shimura Varieties},
		series={Lecture Notes in Math.},
		volume={900},
		place={Berlin},
		publisher={Springer-Verlag},
		date={1982, corrected 2nd printing, 1989},
	},
	pages={9\ndash 100},
	review={\MR{84m:14046}},
}

\bib{Grothendieck1969b}{article}{
	author={Grothendieck, Alexander},
	title={Hodge's general conjecture is false for trivial reasons},
	journal={Topology},
	volume={8},
	date={1969},
	pages={299\ndash 303},
	review={\MR{40 \#5624}},
}

\bib{Imai}{article}{
	author={Imai, Hideo},
	title={On the Hodge groups of some abelian varieties},
	journal={K\=odai Math. Sem. Rep.},
	volume={27},
	date={1976},
	pages={367\ndash 372},
	review={\MR{54 \#5247}},
}

\bib{Kuga1}{book}{
	author={Kuga, Michio},
	title={Fiber Varieties over a Symmetric Space whose Fibers are Abelian Varieties I, II},
	series={Lecture Notes},
	publisher={Univ. Chicago},
	place={Chicago},
	date={1964},
}

\bib{Milne}{article}{
	author={Milne, James S.},
	title={Lefschetz classes on abelian varieties},
	journal={Duke Math. J.},
	volume={96},
	date={1999},
	pages={639\ndash 675},
	review={\MR{99m:14017}},
}

\bib{Mumford1966}{article}{
	author={Mumford, David},
	title={Families of abelian varieties},
	book={
		title={Algebraic Groups and Discontinuous Subgroups (Boulder, Colo., 1965)},
		series={Proc. Sympos. Pure Math.},
		editor={Borel, A.},
		editor={Mostow, G.\,D.},
		volume={9},
		publisher={Amer. Math. Soc.},
		place={Providence, RI},
		date={1966},
	},
	pages={347\ndash 351},
	review={\MR{34 \#5828}},
}

\bib{Murty}{article}{
	author={Murty, V. Kumar},
	title={Exceptional Hodge classes on certain abelian varieties},
	journal={Math. Ann.},
	volume={268},
	date={1984},
	pages={197\ndash 206},
	review={\MR{85m:14063}},
}

\bib{murtybanff}{article}{
	author={Murty, V. Kumar},
	title={Hodge and Weil classes on abelian varieties},
	book={
		title={The Arithmetic and Geometry of Algebraic Cycles (Banff, AB, 1998)},
		editor={Gordon, B.\,B.},
		editor={Lewis, J.\,D.},
		editor={M\"uller-Stach, S.},
		editor={Saito, S.},
		editor={Yui, N.},
		series={NATO Sci. Ser. C Math. Phys. Sci.},
		volume={548},
		publisher={Kluwer Acad. Publ.},
		place={Dordrecht},
		date={2000},
	},
	pages={83\ndash 115},
	review={\MR{2001d:14013}},
}

\bib{Rosenlicht}{article}{
	author={Rosenlicht, Maxwell},
	title={Some rationality questions on algebraic groups},
	journal={Ann. Mat. Pura Appl. (4)},
	volume={43},
	date={1957},
	pages={25\ndash 50},
	review={\MR{19,767h}},
}

\bib{Satake1965}{article}{
	author={Satake, Ichiro},
	title={Holomorphic imbeddings of symmetric domains into a Siegel space},
	journal={Amer. J. Math.},
	volume={87},
	date={1965},
	pages={425\ndash 461},
	review={\MR{33 \#4326}},
}

\bib{Satake1967}{article}{
	author={Satake, Ichiro},
	title={Symplectic representations of algebraic groups satisfying a certain analyticity condition},
	journal={Acta Math.},
	volume={117},
	date={1967},
	pages={215\ndash 279},
	review={\MR{35 \#6694}},
}

\bib{Satakebook}{book}{
	author={Satake, Ichiro},
	title={Algebraic Structures of Symmetric Domains},
	series={Publ. Math. Soc. Japan},
	volume={14 (Kan\^o Mem. Lect. 4)},
	publisher={Iwanami Shoten, Tokyo, and Princeton Univ. Press},
	place={Princeton, NJ},
	date={1980},
	review={\MR{82i:32003}},
}

\bib{Shimura1963b}{article}{
	author={Shimura, Goro},
	title={On analytic families of polarized abelian varieties and automorphic functions},
	journal={Ann. of Math. (2)},
	volume={78},
	date={1963},
	pages={149\ndash 192},
	review={\MR{27 \#5934}},
}

\bib{1964b}{article}{
	author={Shimura, Goro},
	title={On the field of definition for a field of automorphic functions},
	journal={Ann. of Math. (2)},
	volume={80},
	date={1964},
	pages={160\ndash 189},
	review={\MR{29 \#4739}},
}

\end{biblist}
\end{bibdiv}

\end{document}